\title{On differences between DP-coloring and list coloring}
\date{}
\author{Anton~Bernshteyn\thanks{Department of Mathematics, University of Illinois at Urbana--Champaign, IL, USA, \href{bernsht2@illinois.edu}{\texttt{bernsht2@illinois.edu}}. Research of this author is partially supported by the Illinois Distinguished Fellowship.} \and Alexandr~Kostochka\thanks{Department of Mathematics, University of Illinois at Urbana--Champaign, IL, USA and Sobolev Institute of Mathematics, Novosibirsk, Russia, \href{mailto:kostochk@math.uiuc.edu}{\texttt{kostochk@math.uiuc.edu}}. Research of this author is supported in part by NSF grant DMS-1600592 and grants 15-01-05867 and 16-01-00499 of the Russian Foundation for Basic Research.}}
\newtheorem{theo}{Theorem}[section]
\newtheorem{lemma}[theo]{Lemma}
\newtheorem{conj}[theo]{Conjecture}
\newtheorem{problem}[theo]{Problem}
\newcommand*{\myproofname}{Proof}
\theoremstyle{definition}
\newtheorem{defn}[theo]{Definition}
\newtheorem{exmp}[theo]{Example}
\newtheorem{remk}[theo]{Remark}
\newcommand{\0}{\varnothing}
\newcommand{\set}[1]{\{#1\}}
\newcommand{\N}{\mathbb{N}}
\newcommand{\Z}{\mathbb{Z}}
\newcommand{\Cov}[1]{\mathscr{#1}}
\renewcommand{\epsilon}{\varepsilon}
\renewcommand{\phi}{\varphi}
\renewcommand{\leq}{\leqslant}
\renewcommand{\geq}{\geqslant}
\newcommand{\powerset}[1]{\operatorname{Pow}(#1)}
\newcommand{\defeq}{\coloneqq}
\numberwithin{equation}{section}
\newcommand{\neutralize}[1]{\expandafter\let\csname c@#1\endcsname\count@}
\begin{document}
	
	\maketitle
	
	\begin{abstract}
		\noindent DP-coloring (also known as correspondence coloring) is a generalization of list coloring introduced recently by Dvo\v r\' ak and Postle~\cite{DP15}. Many known upper bounds for the list-chromatic number extend to the DP-chromatic number, but not all of them do. In this  note we describe some  properties of DP-coloring that set it aside from list coloring. In particular, we give an example of a planar bipartite graph with DP-chromatic number $4$ and prove that the edge-DP-chromatic number of a $d$-regular graph with $d\geq 2$ is always at least $d+1$.
	\end{abstract}
	
	\section{Introduction}
	
	\subsection{Basic notation and conventions}
	
	We use $\N$ to denote the set of all nonnegative integers. 
	For a set $S$, $\powerset{S}$ denotes the power set of $S$, i.e., the set of all subsets of $S$. All graphs considered here are finite, undirected, and simple, except in Section~\ref{sec:multi}, which mentions (loopless) multigraphs. For a graph~$G$, $V(G)$ and $E(G)$ denote the vertex and the edge sets of $G$ respectively. For a subset $U \subseteq V(G)$, $G[U]$ is the subgraph of $G$ induced by $U$. For two subsets $U_1$, $U_2 \subseteq V(G)$, $E_G(U_1, U_2) \subseteq E(G)$ is the set of all edges of $G$ with one endpoint in $U_1$ and the other one in $U_2$. The maximum degree of $G$ is denoted by $\Delta(G)$. 
	
	\subsection{Graph coloring, list coloring, and DP-coloring}
	
	Recall that a \emph{proper coloring} of a graph $G$ is a function $f \colon V(G) \to C$, where $C$ is a set of \emph{colors}, such that  $f(u) \neq f(v)$ for each edge $uv \in E(G)$. The \emph{chromatic number} $\chi(G)$ of $G$ is the smallest $k \in \N$ such that there exists a proper coloring $f \colon V(G) \to C$ with $|C| = k$.
	
	\emph{List coloring} is a generalization of ordinary graph coloring that was introduced independently by Vizing~\cite{Viz76} and Erd\H{o}s, Rubin, and Taylor~\cite{ERT79}. As in the case of ordinary graph coloring, let $C$ be a set of \emph{colors}. A \emph{list assignment} for a graph $G$ is a function $L \colon V(G) \to \powerset{C}$; if $|L(u)| = k$ for all $u \in V(G)$, then~$L$ is called a \emph{$k$-list assignment}. A proper coloring $f \colon V(G) \to C$ is called an \emph{$L$-coloring} if  $f(u) \in L(u)$ for each $u \in V(G)$. The \emph{list-chromatic number} $\chi_\ell(G)$ of $G$  is the smallest $k \in \N$ such that $G$ admits an $L$-coloring for every $k$-list assignment $L$ for $G$. An immediate consequence of this definition is that $\chi_\ell(G) \geq \chi(G)$ for all graphs~$G$, since ordinary coloring is the same as $L$-coloring with $L(u) = C$ for all $u \in V(G)$. On the other hand, it is well-known that the gap between $\chi(G)$ and $\chi_\ell(G)$ can be arbitrarily large; for instance, $\chi(K_{n, n}) = 2$, while $\chi_\ell(K_{n,n}) = (1 + o(1))\log_2(n) \to \infty$ as $n \to \infty$, where $K_{n,n}$ denotes the complete bipartite graph with both parts having size $n$.
	
	In this paper we study a further generalization of list coloring that was recently introduced by Dvo\v r\' ak and Postle~\cite{DP15}; they called it \emph{correspondence coloring}, and we call it \emph{DP-coloring} for short. In the setting of DP-coloring, not only does each vertex get its own list of available colors, but also the identifications between the colors in the lists can vary from edge to edge.
	
	\begin{defn}\label{defn:cover}
		Let $G$ be a graph. A \emph{cover} of $G$ is a pair $\Cov{H} = (L, H)$, consisting of a graph $H$ and a function $L \colon V(G) \to \powerset{V(H)}$, satisfying the following requirements:
		\begin{enumerate}[labelindent=\parindent,leftmargin=*,label=(C\arabic*)]
			\item the sets $\set{L(u) \,:\,u \in V(G)}$ form a partition of $V(H)$;
			\item for every $u \in V(G)$, the graph $H[L(u)]$ is complete;
			\item if $E_H(L(u), L(v)) \neq \0$, then either $u = v$ or $uv \in E(G)$;
			\item \label{item:matching} if $uv \in E(G)$, then $E_H(L(u), L(v))$ is a matching.
		\end{enumerate}
		A cover $\Cov{H} = (L, H)$ of $G$ is \emph{$k$-fold} if $|L(u)| = k$ for all $u \in V(G)$.
	\end{defn}
	
	\begin{remk}
		The matching $E_H(L(u), L(v))$ in Definition~\ref{defn:cover}\ref{item:matching} does not have to be perfect and, in particular, is allowed to be empty.
	\end{remk}

	
	
	\begin{defn}
		Let $G$ be a graph and let $\Cov{H} = (L, H)$ be a cover of $G$. An \emph{$\Cov{H}$-coloring} of $G$ is an independent set in $H$ of size $|V(G)|$.
	\end{defn}
	
	\begin{remk}\label{remk:single}
		By definition, if $\Cov{H} = (L, H)$ is a cover of $G$, then $\set{L(u)\,:\, u \in V(G)}$ is a partition of~$H$ into $|V(G)|$ cliques. Therefore, an independent set $I \subseteq V(H)$ is an $\Cov{H}$-coloring of $G$ if and only if $|I \cap L(u)| = 1$ for all $u \in V(G)$.
	\end{remk}
	
	\begin{defn}
		Let $G$ be a graph. The \emph{DP-chromatic number} $\chi_{DP}(G)$ of $G$ is the smallest $k \in \N$ such that $G$ admits an $\Cov{H}$-coloring for every $k$-fold cover $\Cov{H}$ of $G$.
	\end{defn}
	
	\begin{exmp}\label{exmp:cycles}
	Figure~\ref{fig:cycle} shows two distinct $2$-fold covers of the $4$-cycle $C_4$. Note that $C_4$ admits an $\Cov{H}_1$-coloring but not an $\Cov{H}_2$-coloring. In particular, $\chi_{DP}(C_4) \geq 3$; on the other hand, it can be easily seen that $\chi_{DP}(G) \leq \Delta(G) + 1$ for any graph $G$, and so we have $\chi_{DP}(C_4) = 3$. A similar argument demonstrates that $\chi_{DP}(C_n) = 3$ for any cycle $C_n$ of length $n \geq 3$.
	\end{exmp}
	
	\begin{figure}[h]
		\centering	
		\begin{tikzpicture}[scale=0.7]
		\definecolor{light-gray}{gray}{0.95}

		\filldraw[fill=light-gray]
		(6.5,0) circle [x radius=1cm, y radius=5mm, rotate=45]
		(6.5,3) circle [x radius=1cm, y radius=5mm, rotate=-45]
		(9.5,0) circle [x radius=1cm, y radius=5mm, rotate=-45]
		(9.5,3) circle [x radius=1cm, y radius=5mm, rotate=45];
		
		
		\foreach \x in {(6, -0.5), (6, 3.5), (7, 0.5), (7, 2.5), (9, 0.5), (9, 2.5), (10, -0.5), (10, 3.5)}
		\filldraw \x circle (4pt);
		
		\draw[thick] (6, -0.5) -- (6, 3.5) -- (10, 3.5) -- (10, -0.5) -- cycle;
		
		\draw[thick] (7, 0.5) -- (7, 2.5) -- (9, 2.5) -- (9, 0.5) -- cycle;
		
		\draw[thick] (6, -0.5) -- (7, 0.5) (6, 3.5) -- (7, 2.5) (9, 2.5) -- (10, 3.5) (10, -0.5) -- (9, 0.5);
		
		\node at (8, -1.5) {$\Cov{H}_1$};
		
		\filldraw[fill=light-gray]
		(13+3,0) circle [x radius=1cm, y radius=5mm, rotate=45]
		(13+3,3) circle [x radius=1cm, y radius=5mm, rotate=-45]
		(16+3,0) circle [x radius=1cm, y radius=5mm, rotate=-45]
		(16+3,3) circle [x radius=1cm, y radius=5mm, rotate=45];
		
		
		\foreach \x in {(12.5+3, -0.5), (12.5+3, 3.5), (13.5+3, 0.5), (13.5+3, 2.5), (15.5+3, 0.5), (15.5+3, 2.5), (16.5+3, -0.5), (16.5+3, 3.5)}
		\filldraw \x circle (4pt);
		
		\draw[thick] (12.5+3, -0.5) -- (12.5+3, 3.5) -- (16.5+3, 3.5) -- (16.5+3, -0.5) -- (13.5+3, 0.5) -- (13.5+3, 2.5) -- (15.5+3, 2.5) -- (15.5+3, 0.5) -- cycle;
		
		\draw[thick] (12.5+3, -0.5) -- (13.5+3, 0.5) (12.5+3, 3.5) -- (13.5+3, 2.5) (15.5+3, 0.5) -- (16.5+3, -0.5) (15.5+3, 2.5) -- (16.5+3, 3.5);
		
		\node at (14.5+3, -1.5) {$\Cov{H}_2$};
		\end{tikzpicture}
		\caption{Two distinct $2$-fold covers of a $4$-cycle.
		}\label{fig:cycle}
	\end{figure}
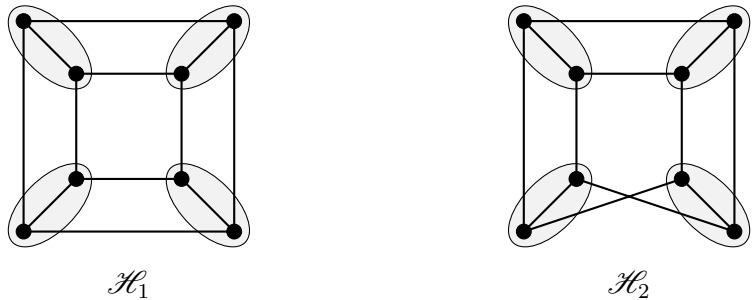
	
	One can construct a cover of a graph $G$ based on a list assignment for~$G$, thus showing that list coloring is a special case of DP-coloring and, in particular, $\chi_{DP}(G) \geq \chi_\ell(G)$ for all graphs $G$.
	
	\begin{figure}[h]
		\centering
		\begin{tikzpicture}[scale=0.7,every text node part/.style={align=left}]
		\draw[thick] (-7,0) node[anchor=east] (u1) {$u_1$} -- (-5, 2) node[anchor=south] {$u_2$} -- (-3, 0) node[anchor=west] (u3) {$u_3$} -- (-5, -2) node[anchor=north] {$u_4$} -- cycle (u1) -- (u3);
		
		\foreach \x in {(-7,0), (-5,2), (-3, 0), (-5, -2)}
		\filldraw \x circle (4pt);
		
		\node at (0,1) {$L(u_1) = \set{1,2}$\\$L(u_2) = \set{1,3}$\\$L(u_3) = \set{1,2}$\\$L(u_4) = \set{2,3}$};
		
		
		\definecolor{light-gray}{gray}{0.95}
		
		\begin{scope}[xshift=2cm]
		
		\filldraw[fill=light-gray] (3+3,0) circle [x radius=0.3cm, y radius=0.8cm] (5+3,2) circle [x radius=0.3cm, y radius=0.8cm] (7+3,0) circle [x radius=0.3cm, y radius=0.8cm] (5+3,-2) circle [x radius=0.3cm, y radius=0.8cm];
		
		
		
		
		
		\foreach \x in {(3+3,0.5), (3+3, -0.5), (5+3,1.5), (5+3, 2.5), (5+3,-1.5), (5+3, -2.5), (7+3, 0.5), (7+3, -0.5)}
		\filldraw \x circle (4pt);
		
		\draw[thick] (3+3, 0.5)  -- (3+3, -0.5)  -- (5+3, -1.5) -- (7+3, -0.5) -- (7+3, 0.5) -- (5+3, 1.5)  -- cycle -- (7+3, 0.5) (5+3, 2.5) -- (5+3, 1.5) (5+3, -1.5) -- (5+3, -2.5) (3+3, -0.5) -- (7+3, -0.5);
		
		\node at (3.5,0) {$L'(u_1) = \left\{\setlength\arraycolsep{0pt}\begin{array}{l} (u_1, 1) \\ (u_1, 2)\end{array}\right.$};
		\node at (5.5,2) {$L'(u_2) = \left\{\setlength\arraycolsep{0pt}\begin{array}{l} (u_2, 3) \\ (u_2, 1)\end{array}\right.$};
		\node at (12.5,0) {$\left.\setlength\arraycolsep{0pt}\begin{array}{l} (u_3, 1) \\ (u_3, 2)\end{array}\right\} = L'(u_3)$};
		\node at (10.5,-2) {$\left.\setlength\arraycolsep{0pt}\begin{array}{l} (u_4, 2) \\ (u_4, 3)\end{array}\right\} = L'(u_4)$};
		\end{scope}
		
		\end{tikzpicture}
		\caption{A graph with a $2$-list assignment and the corresponding $2$-fold cover.}\label{fig:list}
	\end{figure}
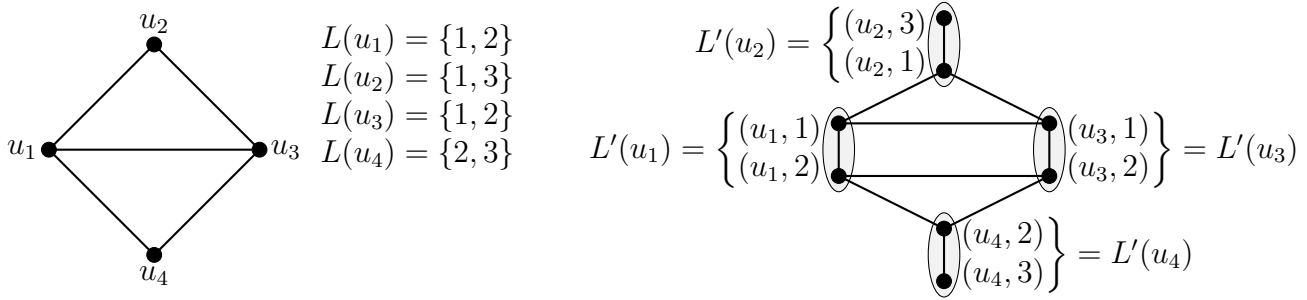
	
	More precisely, let $G$ be a graph and suppose that $L \colon V(G) \to \powerset{C}$ is a list assignment for~$G$, where $C$ is a set of colors. Let $H$ be the graph with vertex set
	$$
		V(H) \defeq \set{(u, c)\,:\, u \in V(G) \text{ and } c \in L(u)},
	$$
	in which two distinct vertices $(u, c)$ and $(v, d)$ are adjacent if and only if
	\begin{itemize}
		\item[--] either $u = v$,
		\item[--] or else, $uv \in E(G)$ and $c = d$.
	\end{itemize}
	For each $u \in V(G)$, set
	$$
		L'(u) \defeq \set{(u, c) \,:\, c \in L(u)}.
	$$
	Then $\Cov{H} \defeq (L', H)$ is a cover of $G$, and there is a natural bijective correspondence between the $L$-colorings and the $\Cov{H}$-colorings of $G$. Indeed, if $f \colon V(G) \to C$ is an $L$-coloring of $G$, then the set
	$$
		I_f \defeq \set{(u, f(u)) \,:\, u \in V(G)}
	$$
	is an $\Cov{H}$-coloring of $G$. Conversely, given an $\Cov{H}$-coloring $I \subseteq V(H)$ of $G$, $|I \cap L'(u)| = 1$ for all $u \in V(G)$, so one can define an $L$-coloring $f_I \colon V(G) \to C$ by the property $$(u, f_I(u)) \in I \cap L'(u)$$ for all $u \in V(G)$.

	\subsection{DP-coloring vs. list coloring and the results of this note}
	
	Some upper bounds on list-chromatic number hold for DP-chromatic number as well. For instance, it is easy to see that $\chi_{DP}(G) \leq d +1$ for any $d$-degenerate graph $G$. Dvo\v r\' ak and Postle~\cite{DP15} observed that for any planar graph $G$, $\chi_{DP}(G) \leq 5$ and, moreover, $\chi_{DP}(G) \leq 3$ if $G$ is a planar graph of girth at least $5$ (these statements are extensions of classical results of Thomassen~\cite{Tho94, Tho95} on list colorings).
	
	Furthermore, there are statements about list coloring whose only known proofs involve DP\-/coloring in essential ways. For example, the reason why Dvo\v r\' ak and Postle originally introduced DP\-/coloring was to prove that every planar graph without cycles of lengths $4$ to $8$ is $3$-list-colorable~\cite[Theorem~1]{DP15}, thus answering a long-standing question of Borodin~\cite[Problem~8.1]{Bor13}. Another example can be found in~\cite{BK16}, where Dirac's theorem on the minimum number of edges in critical graphs~\cite{Dir57, Dir74} is extended to the framework of DP-colorings, yielding a solution to the problem, posed by Kostochka and Stiebitz~\cite{KS02}, of classifying list-critical graphs that satisfy Dirac's bound with equality.
	
	On the other hand, DP-coloring and list coloring are also strikingly different in some respects. For instance, Bernshteyn~\cite[Theorem~1.6]{Ber16} showed that the DP-chromatic number of every graph with average degree $d$ is $\Omega(d/\log d)$, i.e., close to linear in $d$. Recall that due to a celebrated result of Alon~\cite{Alo00}, the list-chromatic number of such graphs is $\Omega(\log d)$, and this bound is sharp for ``small'' bipartite graphs. In spite of this, known upper bounds on list-chromatic numbers often have the same order of magnitude as in the DP\-/coloring setting. For example, by Johansson's theorem~\cite{Joh}, triangle-free graphs $G$ of maximum degree~$\Delta$ satisfy $\chi_\ell(G) = O(\Delta/\log\Delta)$. The same asymptotic upper bound holds for $\chi_{DP}(G)$~\cite[Theorem~1.7]{Ber16}. Recently, Molloy~\cite{Mol} refined Johansson's result to $\chi_\ell(G) \leq (1 + o(1)) \Delta/\ln\Delta$, and this improved bound, including the constant factor, also generalizes to DP-colorings~\cite{BerJM}. 
	
	 Important tools in the study of list coloring that do not generalize to the framework of DP\-/coloring are the orientation theorems of Alon and Tarsi~\cite{AT92} and the closely related Bondy--Boppana--Siegel lemma (see~\cite{AT92}). Indeed, they can be used to prove that even cycles are $2$-list-colorable, while the DP-chromatic number of any cycle is $3$, regardless of its length (see Example~\ref{exmp:cycles}). In this note we demonstrate the failure in the context 
	of DP-coloring of two other list-coloring results whose proofs rely on either the Alon--Tarsi method or the Bondy--Boppana--Siegel lemma.
	
	A well-known application of the orientation method is the following result:
	
	\begin{theo}[{Alon--Tarsi~\cite[Corollary~3.4]{AT92}}]\label{theo:AT}
		Every planar bipartite graph is $3$-list-colorable.
	\end{theo}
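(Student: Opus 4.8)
The plan is to apply the Alon--Tarsi orientation method through the Combinatorial Nullstellensatz, using bipartiteness to trivialize the parity bookkeeping and planarity to control out-degrees. First I would fix a plane bipartite graph $G$ on vertex set $V$ and associate to it the graph polynomial $P_G(\mathbf{x}) = \prod_{uv \in E(G)} (x_u - x_v)$, where for each edge one endpoint is designated as the ``head'' to fix the sign. Recall the standard fact that, for any orientation $D$ of $G$ with out-degree sequence $(d_v)_{v \in V}$, the coefficient of the monomial $\prod_{v} x_v^{d_v}$ in $P_G$ equals, up to sign, $EE(D) - OE(D)$, where $EE(D)$ (resp.\ $OE(D)$) counts the spanning Eulerian subdigraphs of $D$ with an even (resp.\ odd) number of edges. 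By the Combinatorial Nullstellensatz, if this coefficient is nonzero and $|L(v)| > d_v$ for every $v$, then $G$ admits an $L$-coloring. Hence it suffices to produce an orientation $D$ with $d^+_D(v) \leq 2$ for all $v$ and $EE(D) \neq OE(D)$.

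The key observation, and the step that makes the method succeed so cleanly here, is that bipartiteness forces $OE(D) = 0$ for \emph{every} orientation $D$ of $G$. Indeed, a spanning Eulerian subdigraph decomposes into edge-disjoint directed cycles, and since $G$ is bipartite, every such cycle has even length; thus every Eulerian subdigraph of $D$ has an even number of edges. Therefore $OE(D) = 0$, while $EE(D) \geq 1$ because the empty subdigraph is always counted. Consequently $EE(D) \neq OE(D)$ holds for \emph{any} orientation of a bipartite graph, and the sole remaining task is to find one with all out-degrees at most $2$.

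For this I would invoke Hakimi's orientation theorem: $G$ admits an orientation with $d^+_D(v) \leq 2$ for all $v$ if and only if $|E(G[S])| \leq 2|S|$ for every $S \subseteq V$. Since each induced subgraph $G[S]$ is again planar and bipartite, Euler's formula (using that a bipartite plane graph has girth at least $4$) yields $|E(G[S])| \leq 2|S| - 4 \leq 2|S|$ whenever $|S| \geq 3$, and the bound $|E(G[S])| \leq 2|S|$ is immediate for $|S| \leq 2$. Thus Hakimi's condition holds, the required orientation exists, and combining the three steps gives $3$-choosability. I do not anticipate a genuine obstacle once the bipartite-parity observation is in place; the only point demanding real care is the precise bookkeeping behind the coefficient formula (the signs and the correspondence between monomials and out-degree sequences), which is standard but must be pinned down exactly for the Nullstellensatz conclusion to go through.
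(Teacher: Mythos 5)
Your proposal is correct and is precisely the argument the paper has in mind: Theorem~\ref{theo:AT} is stated here without proof as ``a well-known application of the orientation method'' of Alon and Tarsi, and your three steps (the Eulerian-subdigraph coefficient formula combined with the Combinatorial Nullstellensatz, the observation that bipartiteness forces $OE(D)=0$ while the empty subdigraph gives $EE(D)\geq 1$, and the existence of an orientation with all out-degrees at most $2$ via $|E(G[S])|\leq 2|S|-4$) reproduce the original Alon--Tarsi proof of their Corollary~3.4. There is no gap; the only caveat is the one you already flag, namely that the sign conventions in the coefficient formula must be fixed consistently, which is standard.
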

	
	We show that Theorem~\ref{theo:AT} does not hold for DP-colorings (note that every planar triangle-free graph is $3$-degenerate, hence $4$-DP-colorable):
	
	\begin{theo}\label{theo:plan_bip}
		There exists a planar bipartite graph $G$ with $\chi_{DP}(G) = 4$.
	\end{theo}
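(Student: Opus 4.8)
The upper bound is automatic. A bipartite graph is triangle-free, and a triangle-free planar graph on $n$ vertices has at most $2n-4$ edges, so it has a vertex of degree at most~$3$; thus every planar bipartite graph is $3$-degenerate, and since $\chi_{DP}(G)\leq d+1$ for every $d$-degenerate graph, any planar bipartite $G$ satisfies $\chi_{DP}(G)\leq 4$. It therefore suffices to exhibit a planar bipartite $G$ together with a $3$-fold cover $\Cov{H}$ that admits no $\Cov{H}$-coloring. I will identify each $L(u)$ with $\interval{3}=\set{0,1,2}$ and take every matching $E_H(L(u),L(v))$ to be perfect, recording it as a permutation of $\interval{3}$; this loses nothing, since completing a matching only adds forbidden pairs and hence can only destroy colorings, so if any uncolorable $3$-fold cover exists there is one of this form. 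An $\Cov{H}$-coloring is then just a map $\phi\colon V(G)\to\interval{3}$ avoiding the matched pair on each edge.

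The plan is to reduce everything to a single gadget. I aim to build a planar bipartite graph $G_0$ with a distinguished terminal $z$ and a partial cover such that in \emph{every} coloring of $G_0$ the vertex $z$ avoids one prescribed color, say $\phi(z)\neq 2$. Granting this, I take three copies of $G_0$ and permute the colors in the $i$-th copy (an isomorphism of covers) so that its terminal forbids color $i$; I then glue the three copies by identifying their terminals into one vertex $z$. Choosing plane embeddings with $z$ on the outer face of each copy keeps the result planar, and placing $z$ in a common part keeps it bipartite, since $z$ is the only shared (cut) vertex and no cycle crosses between copies. In the glued graph any coloring assigns $z$ some color $c\in\set{0,1,2}$, but the $c$-th copy forbids $\phi(z)=c$; hence no coloring exists and $\chi_{DP}(G)=4$.

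Constructing $G_0$ is the heart of the matter. Forbidding $\phi(z)=2$ amounts to making $G_0-z$ uncolorable once $z$ is pinned to $2$: pinning $z$ deletes exactly one color—freely chosen, via the matching on the spoke $zw$—from each $w\in N(z)$, turning each neighbor into a $2$-element instance. So the task becomes to find a planar bipartite $H=G_0-z$, an independent set $S=N(z)$ (independent because $z$ is on one side of the bipartition), a $3$-fold cover of $H$, and a choice of one forbidden color per vertex of $S$, so that the resulting mixed $2$-/$3$-list instance has no coloring. The governing difficulty is that in the DP setting restrictions do not propagate along paths: a vertex confined to two colors still leaves each neighbor at least one free color, so a degree-$2$ vertex can always be colored last and never contributes to uncolorability. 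Consequently $H$ must force, at some degree-$3$ vertex $t$ opposite $S$, that the three colors matched to $t$ by its neighbors in $S$ exhaust $\interval{3}$ in every coloring, which requires the vertices of $S$ to be \emph{correlated} through the rest of $H$—a global constraint unavailable to ordinary list assignments but achievable with DP-covers precisely because the matchings may be twisted independently on each edge (the same twisting that already produces the non-colorable cover $\Cov{H}_2$ of $C_4$ in Example~\ref{exmp:cycles}).

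The main obstacle is thus the explicit design of $H$ and its twisted cover realizing this correlated ``all-different'' behaviour while remaining planar and bipartite. I expect to assemble $H$ from several twisted even cycles sharing the vertices of $S$, exhibit a plane drawing with $z$ on the outer face, confirm bipartiteness, and then verify by a short finite case check—over the few admissible colorings of $S$—that no coloring of $G_0$ survives with $\phi(z)=2$. Once this gadget is in hand, the degeneracy upper bound and the three-fold gluing are routine, and they combine to give a planar bipartite graph with $\chi_{DP}=4$.
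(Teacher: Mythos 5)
Your upper bound and your gluing logic are sound: planar bipartite graphs are $3$-degenerate, hence $4$-DP-colorable, and \emph{if} a planar bipartite gadget $G_0$ with a terminal $z$ existed such that every coloring from its $3$-fold cover omits one prescribed color at $z$, then gluing three color-permuted copies at $z$ (with $z$ on the outer face of each) would indeed yield the theorem. The problem is that this gadget --- which is the entire content of the result --- is never constructed. You correctly diagnose why it is delicate (a vertex left with two of three colors can always be colored last, so restrictions do not propagate along paths, and the neighbors of $z$ must be correlated through the rest of the graph via twisted matchings), but the argument then stops at ``I expect to assemble $H$ from several twisted even cycles'' and ``once this gadget is in hand.'' As written there is no graph and no cover to verify, so there is no proof of the lower bound.

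It is worth noting that the paper avoids ever having to build your single-terminal gadget, which is a strictly stronger primitive than what is needed. It uses the cube skeleton $Q$ with a cover $\Cov{F}$ in which two vertices $a,b$ carry \emph{singleton} lists; pinning both reduces $c_1$ and $c_2$ to single available vertices and then leaves the $4$-cycle $d_1d_2d_3d_4$ with two choices per vertex arranged as the uncolorable twisted cover of $C_4$ from Example~\ref{exmp:cycles} --- exactly the core mechanism you were reaching for. Since this gadget only excludes one specific \emph{pair} of terminal colors, the paper introduces two identification vertices $a^\ast, b^\ast$ with $3$-element lists and a $3\times 3$ array of cube copies, copy $Q_{ij}$ being responsible for the pair $(x_i,y_j)$; whatever pair is chosen, the corresponding copy fails by Lemma~\ref{lemma:K}. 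Your version instead demands that pinning $z$ alone --- which deletes only one color from each neighbor rather than pinning the neighbors outright --- already forces uncolorability; realizing that would most likely require iterating a two-terminal construction anyway. To complete your proof you must either exhibit and verify your gadget $G_0$ explicitly, or switch to the two-terminal, nine-copy scheme.
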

	
	This answers a question of Grytczuk (personal communication, 2016). We prove Theorem~\ref{theo:plan_bip} in Section~\ref{sec:plan_bip}.
	
	Our second result concerns edge colorings. Recall that the \emph{line graph} $\mathsf{Line}(G)$ of a graph $G$ is the graph with vertex set $E(G)$ such that two vertices of $\mathsf{Line}(G)$ are adjacent if and only if the corresponding edges of $G$ share an endpoint. The chromatic number, the list-chromatic number, and the DP-chromatic number of $\mathsf{Line}(G)$ are called the \emph{chromatic index}, the \emph{list-chromatic index}, and the \emph{DP-chromatic index} of $G$ and are denoted by $\chi'(G)$, $\chi'_\ell(G)$, and $\chi'_{DP}(G)$ respectively. The following hypothesis is known as the \emph{Edge List Coloring Conjecture} and is a major open problem in graph theory:
	
	\begin{conj}[{Edge List Coloring Conjecture, see~\cite{JT95}}]\label{conj:ELCC}
		For every graph $G$, $\chi'_\ell(G) = \chi'(G)$.
	\end{conj}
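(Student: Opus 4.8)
The plan is to attack the conjecture in full generality through the \emph{kernel method}, the same circle of ideas behind the Bondy--Boppana--Siegel lemma~\cite{AT92}. Since ordinary edge coloring is the special case of edge list coloring in which all lists coincide, one always has $\chi'_\ell(G) \geq \chi'(G)$, so the entire content is the reverse inequality $\chi'_\ell(G) \leq \chi'(G)$. Set $k \defeq \chi'(G)$. Recall that an orientation $D$ of a graph $H$ is \emph{kernel-perfect} if every induced subdigraph of $D$ has a \emph{kernel}, i.e.\ an independent set $K$ such that every vertex outside $K$ has an out-neighbour in $K$. The engine of the method is the following lemma: if $D$ is a kernel-perfect orientation of $H$ and $L$ is a list assignment with $|L(v)| \geq d^+_D(v) + 1$ for every $v$, then $H$ is $L$-colorable. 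One proves it by induction on $|V(H)|$, choosing any color $\alpha$ occurring in some list, letting $V_\alpha$ be the set of vertices whose list contains $\alpha$, taking a kernel $K$ of $D[V_\alpha]$, coloring $K$ with $\alpha$, and deleting $K$; every vertex of $V_\alpha \setminus K$ loses an out-neighbour, so the out-degree bound is preserved for the induction. Thus it suffices to produce a kernel-perfect orientation of the line graph $\mathsf{Line}(G)$ in which every out-degree is at most $k-1$.

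To build such an orientation I would start from an optimal proper edge coloring $c \colon E(G) \to \set{1, \dots, k}$, whose existence is guaranteed by $k = \chi'(G)$ (and by Vizing's theorem $k \leq \Delta(G) + 1$). The edges meeting a fixed vertex $v$ receive distinct colors and form a clique in $\mathsf{Line}(G)$; since $G$ is simple, two adjacent edges share exactly one vertex, so orienting each such clique by comparing colors at that vertex turns $\mathsf{Line}(G)$ into a well-defined clique-acyclic digraph. When $G$ is bipartite with parts $X$ and $Y$, Galvin's key observation is to orient toward the smaller color at vertices of $X$ and toward the larger color at vertices of $Y$; the out-degree of an edge $e=xy$ is then at most $(c(e)-1)+(k-c(e)) = k-1$, and the bipartition forces the resulting orientation to be kernel-perfect. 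This settles the bipartite case exactly, so the remaining task is to emulate the construction when $G$ has odd cycles.

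The hard part --- indeed the reason the conjecture is still open --- is the kernel-perfectness step for non-bipartite $G$, and two obstacles appear at once. First, the device of using opposite color conventions at the two ends of each edge depends on a proper $2$-coloring of $V(G)$, which exists precisely when $G$ is bipartite; without it one cannot simultaneously control the out-degree at both endpoints. Second, and more fundamentally, kernel-perfectness of clique-acyclic orientations is tightly bound to perfectness of the underlying graph, and line graphs of non-bipartite graphs need not be perfect: already $\mathsf{Line}(C_5) \cong C_5$ is an odd hole, every clique-acyclic orientation of which fails to be kernel-perfect. Hence no orientation built naively from an edge coloring can be kernel-perfect in general, and a genuinely new idea would be needed to handle the odd structure. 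My fallback position, and the current state of the art toward the full statement, is to relax exact equality to an asymptotic one: Kahn's probabilistic argument yields $\chi'_\ell(G) = (1+o(1))\,\chi'(G)$ for all graphs, replacing kernel existence by a fractional and random coloring scheme. Closing the gap between this asymptotic bound and the exact identity $\chi'_\ell(G) = \chi'(G)$ --- that is, removing the $o(1)$ by finding a kernel-perfect orientation, or an adequate substitute for one, that survives the presence of odd cycles --- is exactly where I expect the difficulty to concentrate.
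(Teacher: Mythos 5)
The statement you were assigned is a \emph{conjecture} --- the Edge List Coloring Conjecture --- which the paper quotes from~\cite{JT95} precisely because it is a major open problem; the paper contains no proof of it, and only cites Galvin's theorem~\cite{Gal95} as settling the bipartite case (indeed, the paper's own contribution here is negative: Theorem~\ref{theo:reg_ind} shows that Galvin's theorem, and hence any kernel-type argument, does \emph{not} survive the passage to DP-coloring). Judged against that, your proposal is honest and essentially accurate about the state of the art: the kernel lemma you state is the correct Bondy--Boppana--Siegel engine, your orientation of $\mathsf{Line}(G)$ with out-degrees $(c(e)-1)+(k-c(e)) = k-1$ and opposite color conventions on the two sides of the bipartition is exactly Galvin's argument (kernel-perfectness there comes from the stable matching theorem, which you gloss over but which is the one nontrivial step), and your concession that the non-bipartite case is open is not a defect of your writeup but the actual mathematical situation. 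So the ``gap'' in your proof is the entire non-bipartite case, and no referee could ask you to close it.

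One factual correction, though: you claim that $\mathsf{Line}(C_5) \cong C_5$ is an odd hole ``every clique-acyclic orientation of which fails to be kernel-perfect.'' That is too strong. By Richardson's theorem every acyclic digraph (and each of its induced subdigraphs) has a kernel, so any acyclic orientation of $C_5$ is kernel-perfect; even among orientations with cycles, the orientation of $C_5$ with edges $1\to2$, $3\to2$, $3\to4$, $5\to4$, $5\to1$ has the kernel $\set{2,4}$. What is true, and what underlies the Berge--Duchet conjecture proved by Boros and Gurvich, is that for an imperfect graph \emph{some} clique-acyclic orientation lacks a kernel --- for $C_5$, the directed $5$-cycle. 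Note also that this obstructs only the \emph{method}, not the conjecture: $\chi'_\ell(C_5) = \chi'(C_5) = 3$, so odd cycles satisfy the ELCC trivially; the failure is that the particular orientations with the required out-degree bound need not be kernel-perfect once $G$ has odd cycles, which is the correct way to phrase your second obstacle.
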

	
	In an elegant application of the orientation method, Galvin~\cite{Gal95}  
	verified the Edge List Coloring Conjecture for bipartite graphs:
	
	\begin{theo}[{Galvin~\cite{Gal95}}]
		For every bipartite graph $G$, $\chi'_\ell(G) = \chi'(G) = \Delta(G)$.
	\end{theo}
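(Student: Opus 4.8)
The plan is to prove the nontrivial inequality $\chi'_\ell(G) \leq \Delta(G)$, since the remaining relations are standard: $\Delta(G) \leq \chi'(G) \leq \chi'_\ell(G)$ holds for every graph (a vertex of degree $\Delta(G)$ forces $\Delta(G)$ pairwise adjacent edges, and list coloring generalizes ordinary coloring), while $\chi'(G) = \Delta(G)$ for bipartite $G$ is König's edge-coloring theorem. Writing $\Delta \defeq \Delta(G)$ and letting $X, Y$ be the two parts of $G$, it thus suffices to show that $\mathsf{Line}(G)$ admits an $L$-coloring for every $\Delta$-list-assignment $L$ of its vertices (equivalently, of the edges of $G$). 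I would establish this via the \emph{kernel method}: recall that a \emph{kernel} of a digraph $D$ is an independent set $K$ of vertices such that every vertex outside $K$ has an out-neighbor in $K$, and that $D$ is \emph{kernel-perfect} if every induced subdigraph of $D$ has a kernel. The kernel lemma (Bondy--Boppana--Siegel, see~\cite{AT92}) states that if $D$ is kernel-perfect and $L$ is a list assignment with $|L(v)| > d^+_D(v)$ (the out-degree) for every vertex $v$, then the underlying graph of $D$ has an $L$-coloring.

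So the task reduces to orienting $\mathsf{Line}(G)$ so that (i) every out-degree is at most $\Delta - 1$, and (ii) the resulting digraph is kernel-perfect. First I would fix a proper edge-coloring $c \colon E(G) \to \set{1, \dots, \Delta}$, which exists by König's theorem. Any two adjacent edges $e, f$ of $G$ share exactly one endpoint; I orient the corresponding edge of $\mathsf{Line}(G)$ according to the colors $c(e), c(f)$ and the side of the bipartition on which $e$ and $f$ meet: if they meet in $X$, orient from the smaller color to the larger, and if they meet in $Y$, orient from the larger color to the smaller (the point being that the two sides use \emph{opposite} conventions). A direct count then gives (i): for an edge $e$ with $c(e) = k$ meeting $X$ at one end and $Y$ at the other, its out-neighbors through the $X$-end carry colors $> k$ (at most $\Delta - k$ of them) and those through the $Y$-end carry colors $< k$ (at most $k - 1$ of them), so $d^+(e) \leq (\Delta - k) + (k - 1) = \Delta - 1$.

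The crux is (ii), which I would obtain from the Gale--Shapley \emph{stable matching} theorem. Interpret the colors as strict preferences: at each $x \in X$ declare the incident edge with the \emph{larger} color to be preferred, and at each $y \in Y$ the one with the \emph{smaller} color. With this dictionary the orientation is set up so that $e \to f$ holds exactly when $f$ is preferred over $e$ at the vertex they share. Now fix any subset $E' \subseteq E(G)$, which determines an arbitrary induced subdigraph of $\mathsf{Line}(G)$; restricting the preferences to $E'$ and applying Gale--Shapley yields a stable matching $M \subseteq E'$. A matching is precisely an independent set of $\mathsf{Line}(G)$, and stability says that every $e \in E' \setminus M$ has, at one of its endpoints, a matched edge preferred over $e$ — i.e.\ an out-neighbor in $M$. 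Hence $M$ is a kernel of the induced subdigraph, so the orientation is kernel-perfect.

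With (i) and (ii) in hand, the kernel lemma applies to any $\Delta$-list-assignment $L$ (since $|L(e)| = \Delta > \Delta - 1 \geq d^+(e)$) and produces the desired $L$-coloring of $\mathsf{Line}(G)$, completing the proof. I expect the main obstacle to be verifying (ii): one must choose the orientation conventions so that kernels correspond \emph{exactly} to stable matchings, and confirm that this correspondence survives restriction to every induced subdigraph. The opposite preference orders on $X$ and $Y$ are essential both here and in the out-degree bound of step (i), and it is precisely this orientation/kernel argument that has no analogue for DP-coloring.
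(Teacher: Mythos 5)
Your argument is correct and is precisely Galvin's original proof, which the paper does not reproduce but only cites (describing it as an application of the orientation method): orient $\mathsf{Line}(G)$ using a proper $\Delta(G)$-edge-coloring from K\H{o}nig's theorem with opposite conventions on the two sides of the bipartition, bound every out-degree by $\Delta(G)-1$, obtain kernel-perfectness from Gale--Shapley stable matchings, and conclude with the Bondy--Boppana--Siegel kernel lemma. All the details you flag as potential obstacles (strictness of preferences, restriction to arbitrary $E' \subseteq E(G)$, the exact correspondence between stable matchings and kernels) check out, so there is nothing to correct.
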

	
	We show that this famous result fails for DP-coloring; in fact, it is impossible for a $d$-regular graph $G$ with $d \geq 2$ to have DP-chromatic index $d$:
	
	\begin{theo}\label{theo:reg_ind}
		If $d \geq 2$, then every $d$-regular graph $G$ satisfies $\chi'_{DP}(G) \geq d+1$.
	\end{theo}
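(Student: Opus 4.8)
The plan is to exhibit, for every connected $d$-regular graph $G$ with $d \geq 2$, a single $d$-fold cover $\Cov{H}$ of $\mathsf{Line}(G)$ that admits no $\Cov{H}$-coloring; restricting to connected $G$ loses nothing, since a cover of a disconnected graph splits as a disjoint union of covers of its components and it suffices to defeat one component. By Remark~\ref{remk:single}, an $\Cov{H}$-coloring is just a choice of one colour $c(e)$ for each $e \in E(G)$ that is independent in $H$, and because the edges of $G$ at a common vertex $v$ form a clique in $\mathsf{Line}(G)$, independence means exactly that at every $v$ the chosen colours of the $d$ edges at $v$ are pairwise non-matched. The first, trivial case is when $G$ is not properly $d$-edge-colourable: taking the cover induced by the constant list assignment $L(e) \defeq \set{1,\dots,d}$ with identity matchings, an $\Cov{H}$-coloring would be precisely a proper $d$-edge-colouring of $G$, so none exists and $\chi'_{DP}(G) \geq d+1$.

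It remains to treat graphs admitting a proper $d$-edge-colouring. Here I would first dispose of bipartite $G$ (the case that contrasts with Galvin's theorem) together with all non-bipartite $G$ of even degree by an abelian construction. Set $L(e) \defeq \set{e} \times \Z_d$, fix weights $w(v,e) \in \Z_d$, and declare $(e,a)$ and $(f,b)$ matched at a common endpoint $v$ exactly when $a + w(v,e) = b + w(v,f)$. This is a genuine $d$-fold cover, and a colouring $c$ is proper iff for every $v$ the values $\set{c(e)+w(v,e) : e \ni v}$ are distinct, hence exhaust $\Z_d$. Summing over $e \ni v$ forces $\sum_{e \ni v} c(e) = \binom{d}{2} - \sum_{e \ni v} w(v,e)$ in $\Z_d$, i.e. a linear system $Bc = C$ with $B$ the unsigned vertex--edge incidence matrix. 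Thus the cover has no colouring once $C$ lies outside the image of $B$; since the left kernel of $B$ over $\Z_d$ contains $\mathbf 1_A - \mathbf 1_B$ when $G$ is bipartite with parts $A, B$ (where $|A| = |B|$ makes the $\binom{d}{2}$-terms cancel) and contains $(d/2)\mathbf 1$ when $d$ is even, a single nonzero weight makes $C$ violate the corresponding solvability condition.

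The remaining case---$d$ odd, $G$ non-bipartite and $d$-edge-colourable, the smallest instance being $G = K_4$---is where the real difficulty lies: over $\Z_d$ the incidence matrix $B$ is then surjective, so no sum-type (abelian) invariant can obstruct a colouring, and the above construction provably fails. For this case the plan is to fix a proper $d$-edge-colouring $\phi$, whose colour classes $M_1,\dots,M_d$ are perfect matchings, and to coordinatise: writing $\pi_v \in S_d$ for the permutation recording the chosen colours at $v$, a colouring of a cover built from edge-twists $\tau_e \in S_d$ exists iff one can choose the $\pi_v$ so that $\pi_v(k) = \tau_e(\pi_u(k))$ for every $e = uv \in M_k$. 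One must then select the $\tau_e$ making this permutation system infeasible.

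I expect this last step to be the main obstacle. Each colour class $M_k$ is a matching and hence acyclic, so the only global structure sits in the two-coloured Kempe cycles $M_k \cup M_\ell$; and for $d \geq 3$ the abundant local freedom (the slack among the colours $\neq k,\ell$) can absorb any single transposition, so a purely local or a single-defect twist never obstructs. The obstruction must therefore come from a genuinely non-abelian, global monodromy invariant---a signed count around cycles refining the parity that yields $\chi_{DP}(C_n) = 3$ in Example~\ref{exmp:cycles}. Concretely I would try to compare $\prod_v \mathrm{sgn}(\pi_v)$ with $\prod_e \mathrm{sgn}(\tau_e)$ and arrange the latter to be incompatible with the permutation constraint at some vertex; alternatively, one may attempt to transfer the even-degree argument through the bipartite double cover. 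Establishing that suitable twists force $\pi$ to fail the permutation constraint somewhere, uniformly over all admissible choices, is the crux of the proof.
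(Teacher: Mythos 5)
Your reduction to connected graphs, the trivial case $\chi'(G)=d+1$, and the weighted cover in which $(e,a)$ and $(f,b)$ are matched at a common endpoint $v$ iff $a+w(v,e)=b+w(v,f)$ are all sound, and the necessary condition $\sum_{e\ni v}c(e)=\binom{d}{2}-\sum_{e\ni v}w(v,e)$ in $\Z_d$ does defeat bipartite $G$ and non-bipartite $G$ of even degree exactly as you describe. But the proof is not complete: the case you yourself flag as the crux --- $d$ odd, $G$ non-bipartite and $d$-edge-colourable, the smallest instance being $K_4$ --- is genuinely left open. The monodromy and sign ideas are only sketched; the sign of a permutation is again an abelian ($\Z_2$-valued) invariant, so there is no reason to expect it to succeed where the $\Z_d$-sum fails, and the bipartite double cover transfer is unsubstantiated. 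As written, your argument establishes the theorem only for bipartite graphs and for even $d$.

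The paper closes precisely this gap, and it does so with a cover you already have in hand: a single nonzero weight, i.e.\ identity matchings everywhere except that the matchings between $L(uv)$ and the sets $L(u'v)$ for the other edges $u'v$ at one endpoint $v$ of one fixed edge $uv$ are shifted by $+1$. What changes is the invariant used to show this cover admits no colouring. Instead of a sum modulo $d$, the paper uses an integer cardinality count on $G'=G-uv$ (Lemma~\ref{lemma:same}): in any proper $d$-edge-colouring of $G'$ with $d$ colours, every colour class is a matching of size at most $n/2$, the classes missing at $u$ and at $v$ have size at most $n/2-1$, and $|E(G')|=dn/2-1$, which forces the colour missing at $u$ to coincide with the colour missing at $v$; call it $i$. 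The identity matchings at $u$ then force $uv$ to receive $i$, while at $v$ all colours except $i$ appear on $E(G')$, so some edge $u'v$ has colour $i+1$, and the $+1$ twist makes $(uv,i)$ and $(u'v,i+1)$ adjacent in $H$ --- a contradiction valid for every $d$-regular $G$ with $\chi'(G)=d$, with no parity or bipartiteness hypothesis. This counting obstruction, rather than a group-valued one, is exactly the ingredient your plan is missing, and it turns out to be entirely elementary.
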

	
	We prove Theorem~\ref{theo:reg_ind} in Section~\ref{sec:reg_ind}.
	
	Vizing~\cite{Viz64} proved that the inequality $\chi'(G) \leq \Delta(G) + 1$ holds for all graphs $G$. He also conjectured the following weakening of the Edge List Coloring Conjecture:
	
	\begin{conj}[{Vizing}]\label{conj:viz}
		For every graph $G$, $\chi'_\ell(G) \leq \Delta(G)+1$.
	\end{conj}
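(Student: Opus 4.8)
The final statement is Vizing's list edge colouring conjecture, a weakening of the Edge List Colouring Conjecture (Conjecture~\ref{conj:ELCC}); it remains open in general, so what follows is the line of attack I would pursue together with the point at which it stalls.

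The natural strategy is the \emph{kernel method} that underlies Galvin's proof of the bipartite case. Call an independent set $K$ in a digraph $D$ a \emph{kernel} if every vertex outside $K$ has an out-neighbour in $K$, and call $D$ \emph{kernel-perfect} if every induced subdigraph of $D$ has a kernel. The Bondy--Boppana--Siegel lemma, in the form used by Galvin, asserts that if a graph $F$ admits a kernel-perfect orientation $D$ with $d^+_D(x) < |L(x)|$ for every vertex $x$, then $F$ is $L$-colourable. Applying this with $F = \mathsf{Line}(G)$ and an arbitrary $(\Delta(G)+1)$-list assignment, it would suffice to orient $\mathsf{Line}(G)$ so that (i) every vertex has out-degree at most $\Delta(G)$ and (ii) the orientation is kernel-perfect.

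To build such an orientation I would start from a proper edge colouring $\phi$ of $G$ with $\Delta(G)+1$ colours, available by Vizing's theorem $\chi'(G) \leq \Delta(G)+1$, and orient each edge of $\mathsf{Line}(G)$ --- that is, each pair of adjacent edges of $G$ --- by the relative order of their $\phi$-colours, as Galvin does. In the bipartite case Galvin assigns \emph{opposite} orientation conventions to the two sides of $G$ (towards the larger colour at one side, towards the smaller at the other); this simultaneously forces the out-degree bound (i) and, through a stable-matching argument, the kernel-perfection (ii), the kernels being precisely the stable matchings of the induced subgraphs.

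The main obstacle is that this two-sided convention has no analogue when $G$ is not bipartite, and both requirements fail without it. A single global convention already overshoots the out-degree bound, since an edge can be dominated from both of its endpoints, pushing its out-degree towards $2\Delta(G)$; and, more fundamentally, the odd cycles of $G$ create induced subdigraphs of $\mathsf{Line}(G)$ that are directed odd structures with no kernel, so kernel-perfection is lost. This reflects the general fact that line graphs of non-bipartite graphs need not admit kernel-perfect orientations of small out-degree, and it is the precise reason the conjecture is still open. A complete proof would require either a genuinely new orientation of $\mathsf{Line}(G)$ whose kernel-perfection survives odd cycles, or an entirely different method; at present only asymptotic bounds of the form $\chi'_\ell(G) \leq (1+o(1))\Delta(G)$, obtained by probabilistic nibble arguments, together with the resolution of small cases, are known. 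I therefore do not expect to settle the conjecture, and would realistically aim only at restricted families --- small $\Delta(G)$, or graphs whose line graphs are known to carry kernel-perfect orientations of out-degree at most $\Delta(G)$.
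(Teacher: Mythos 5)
This statement is a conjecture: the paper gives no proof of it and, immediately after stating it, only poses the further open problem of whether it can even fail in the DP-setting, so your recognition that it remains open is exactly the right answer. Your sketch of the natural attack --- the Bondy--Boppana--Siegel kernel lemma applied to an orientation of $\mathsf{Line}(G)$ in the style of Galvin --- and your diagnosis of where it stalls for non-bipartite $G$ (the two-sided orientation convention has no analogue, so both the out-degree bound and kernel-perfection are lost; cf.~\cite{BKW98} on kernel-perfect orientations of line graphs) are accurate and consistent with the paper's own remarks that these orientation tools do not extend beyond the bipartite case; the one cosmetic point is that Galvin's bipartite argument starts from a $\Delta(G)$-edge-colouring via K\H{o}nig's theorem rather than Vizing's $\Delta(G)+1$, though for the general case your choice of starting colouring is the natural one.
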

	
	We do not know if Conjecture~\ref{conj:viz} can be extended to DP-colorings:
	
	\begin{problem}
		Do there exist graphs $G$ with $\chi'_{DP}(G) \geq \Delta(G) + 2$?
	\end{problem}

	In Section~\ref{sec:multi} we discuss two natural ways to define edge-DP-colorings for multigraphs. According to one of them, the DP-chromatic index of the multigraph $K^d_2$ with two vertices joined by $d$ parallel edges is $2d$.

	\section{Proof of Theorem~\ref{theo:plan_bip}}\label{sec:plan_bip}
	
	In this section we construct a planar bipartite graph $G$ with DP-chromatic number $4$. The main building block of our construction is the graph $Q$ shown in Figure~\ref{fig:cube} on the left, i.e., the skeleton of the $3$-dimensional cube. Let $\Cov{F} = (L, F)$ denote the cover of $Q$ shown in Figure~\ref{fig:cube} on the right.
	
	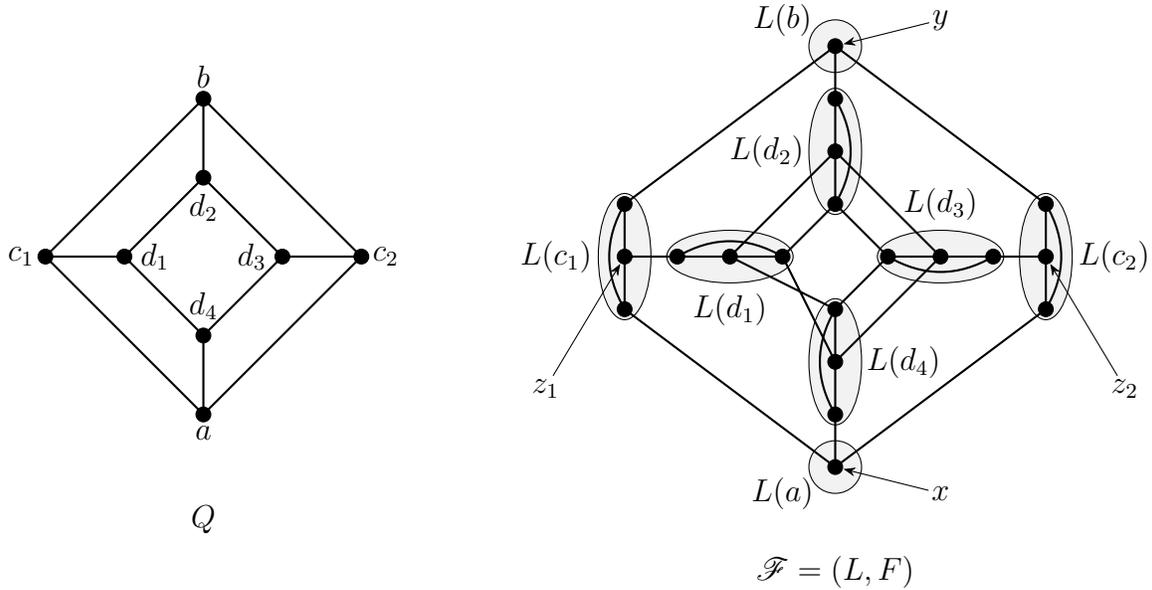
\begin{figure}[h]
		\centering	
		\begin{tikzpicture}[scale=0.7]
		\definecolor{light-gray}{gray}{0.95}

		\foreach \x in {(0,0), (1.5,0), (3,1.5), (3,3), (3,-1.5), (3,-3), (4.5,0), (6,0)}
		\filldraw \x circle (4pt);
		
		\draw[thick] (0,0) node[anchor=east] {$c_1$} -- (1.5,0) -- (3,1.5) -- (3,3) node[anchor = south]{$b$} -- (6,0) node[anchor=west] {$c_2$} -- (4.5,0) -- (3, -1.5) -- (3,-3) node[anchor=north] {$a$} -- cycle -- (3,3)  (3,1.5) -- (4.5,0) (1.5,0) -- (3, -1.5) (3, -3) -- (6,0);
		
		\node at (3, -1.4) [anchor=south] {$d_4$} ;
		\node at (1.6, 0) [anchor=west]{$d_1$};
		\node at (3, 1.4) [anchor=north] {$d_2$};
		\node at (4.4, 0) [anchor=east] {$d_3$};

		\node at (3, -5) {$Q$};
		
		\begin{scope}[xshift=15cm]
		
		\filldraw[fill=light-gray] (-4,0) circle [x radius=5mm, y radius=1.2cm] (4,0) circle [x radius=5mm, y radius=1.2cm] (0,2) circle [x radius=5mm, y radius=1.2cm] (0,-2) circle [x radius=5mm, y radius=1.2cm] (0,4) circle [radius=5mm] (0,-4) circle [radius=5mm] (-2,0) circle [x radius=1.2cm, y radius=5mm] (2,0) circle [x radius=1.2cm, y radius=5mm];
		
		\foreach \x in {1,...,3} 
		\filldraw (\x, 0) circle (4pt) (0, \x) circle (4pt) (-\x, 0) circle (4pt) (0, -\x) circle (4pt);
		\filldraw (-4, -1) circle (4pt) (-4, 1) circle (4pt) (4, -1) circle (4pt) (4, 1) circle (4pt) (-4, 0) circle (4pt) (4, 0) circle (4pt) (0, -4) circle (4pt) (0, 4) circle (4pt);
		
		\draw[thick] (0, -4) -- (-4, -1) (0, -4) -- (4, -1) (0, -4) -- (0, -3);
		\draw[thick] (0, 4) -- (-4, 1) (0, 4) -- (4, 1) (0, 4) -- (0, 3);
		\draw[thick] (-4, 0) -- (-3, 0) (4, 0) -- (3, 0);
		\draw[thick] (-2, 0) -- (0, 2) -- (2, 0) -- (0, -2) -- (-1, 0) -- (0, 1) -- (1, 0) -- (0, -1) -- cycle;
		
		\draw[thick] (-4, -1) -- (-4, 0) -- (-4, 1);
		\draw[thick, bend left] (-4, -1) to (-4,1);
		\draw[thick] (4, -1) -- (4, 0) -- (4, 1);
		\draw[thick, bend right] (4, -1) to (4,1);
		\draw[thick] (-3, 0) -- (-2, 0) -- (-1, 0);
		\draw[thick, bend left] (-3, 0) to (-1,0);
		\draw[thick] (3, 0) -- (2, 0) -- (1, 0);
		\draw[thick, bend left] (3, 0) to (1,0);
		\draw[thick] (0, 1) -- (0, 2) -- (0, 3);
		\draw[thick, bend right] (0, 1) to (0,3);
		\draw[thick] (0, -1) -- (0, -2) -- (0, -3);
		\draw[thick, bend right] (0, -1) to (0,-3);
		
		\node at (0, -6) {$\Cov{F} = (L, F)$};
		
		\node at (-1, -4.5) {$L(a)$};
		\node at (-1, 4.5) {$L(b)$};
		\node at (-5.3, 0) {$L(c_1)$};
		\node at (5.3,0) {$L(c_2)$};
		\node at (-2, -1) {$L(d_1)$};
		\node at (-1.3, 2) {$L(d_2)$};
		\node at (2, 1) {$L(d_3)$};
		\node at (1.3, -2) {$L(d_4)$};
		
		
		\path (2,-4.5) node[inner sep=1pt] (x) {$x$} (0, -4) node[circle, inner sep=0pt, minimum size=6pt] (x1) {};
		\draw[-{Stealth[length=1.6mm]}] (x) to (x1);
		
		\path (2,4.5) node[inner sep=1pt] (y) {$y$} (0, 4) node[circle, inner sep=0pt, minimum size=6pt] (y1) {};
		\draw[-{Stealth[length=1.6mm]}] (y) to (y1);
		
		\path (-5.5,-2.5) node[inner sep=1pt] (z1) {$z_1$} (-4, 0) node[circle, inner sep=0pt, minimum size=6pt] (z11) {};
		\draw[-{Stealth[length=1.6mm]}] (z1) to (z11);
		
		\path (5.5,-2.5) node[inner sep=1pt] (z2) {$z_2$} (4, 0) node[circle, inner sep=0pt, minimum size=6pt] (z21) {};
		\draw[-{Stealth[length=1.6mm]}] (z2) to (z21);
		
		\end{scope}
		\end{tikzpicture}
		\caption{The graph $Q$ (left) and its cover $\Cov{F}$ (right).}
		\label{fig:cube}
	\end{figure}
	
	\begin{lemma}\label{lemma:K}
		The graph $Q$ is not $\Cov{F}$-colorable.
	\end{lemma}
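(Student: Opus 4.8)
The plan is to show that building an $\mathcal{F}$-coloring—by Remark~\ref{remk:single}, an independent set in $F$ meeting each clique $L(u)$ in exactly one vertex—forces an inconsistency around the inner $4$-cycle $d_1d_2d_3d_4$ of $Q$. The strategy is pure constraint propagation: start from the two singleton cliques, push the forced choices inward, and finish with a short parity argument.

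First I would use the singleton cliques $L(a)=\{x\}$ and $L(b)=\{y\}$: any $\mathcal{F}$-coloring must select $x$ and $y$. Reading the matchings incident to $x$ and $y$ off Figure~\ref{fig:cube}, selecting $x$ excludes the ``lower'' vertex of each of $L(c_1)$, $L(c_2)$ and the outer vertex of $L(d_4)$, while selecting $y$ excludes the ``upper'' vertex of each of $L(c_1)$, $L(c_2)$ and the outer vertex of $L(d_2)$. Since $|L(c_1)|=|L(c_2)|=3$ and two vertices of each are now forbidden, the coloring is forced to pick the remaining vertex $z_i$ from $L(c_i)$.

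Next I would propagate one step further: the matching on the edge joining $c_i$ to its unique $d$-neighbor sends $z_i$ to the outer vertex of that $d$-clique, excluding it. Together with the exclusions coming from $x$ and $y$, this removes the outer vertex from every clique $L(d_1),\dots,L(d_4)$, leaving exactly two admissible choices—call them the \emph{inner} and the \emph{middle} vertex—in each. Encoding the surviving choice at $d_i$ by a bit $t_i\in\{\mathrm{inner},\mathrm{middle}\}$, I reduce the entire problem to satisfying the four matching constraints along the cycle $d_1d_2d_3d_4$.

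The decisive observation, again read off Figure~\ref{fig:cube}, is that the matchings on three of these four edges are \emph{parallel} (inner--inner together with middle--middle), which forbids equal endpoints and hence forces $t_i\neq t_{i+1}$, whereas the matching on the fourth edge is \emph{twisted} (inner--middle together with middle--inner), which instead forces its two endpoints to be equal. Going around the cycle, $t_1\neq t_2$ and $t_2\neq t_3$ give $t_1=t_3$, then $t_3\neq t_4$ gives $t_1\neq t_4$, contradicting the equality demanded on the twisted edge; equivalently, an odd number of required flips in a two-element set cannot return to the starting value. Hence no admissible selection exists and $Q$ is not $\mathcal{F}$-colorable. The only delicate part is the bookkeeping in the first two steps—correctly identifying the incident matchings and checking that the forced exclusions leave precisely the inner/middle pair at each $d_i$—after which the contradiction is a one-line parity computation.
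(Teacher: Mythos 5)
Your proposal is correct and follows essentially the same constraint-propagation argument as the paper: force $x$ and $y$, then $z_1$ and $z_2$, and then observe that the two surviving vertices in each $L(d_i)$ form a copy of the non-colorable cover $\Cov{H}_2$ of the $4$-cycle from Figure~\ref{fig:cycle}. Your explicit ``three parallel matchings, one twisted'' parity computation is precisely the reason that cover admits no independent transversal, which the paper leaves as an easy check.
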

	\begin{proof}
		Suppose, towards a contradiction, that $I$ is an $\Cov{F}$-coloring of $Q$. Since $L(a) = \set{x}$, we have $x \in I$, and, similarly, $y \in I$. Since $z_1$ is the only vertex in $L(c_1)$ that is not adjacent to $x$ or $y$, we also have $z_1 \in I$, and, similarly, $z_2 \in I$. This leaves only $2$ vertices available in each of $L(d_1)$, $L(d_2)$, $L(d_3)$, and $L(d_4)$, and it is easy to see that these $8$ vertices do not contain an independent set of size $4$ (cf.~the cover $\Cov{H}_2$ of the $4$-cycle shown in Figure~\ref{fig:cycle} on the right).
	\end{proof}
	
	Consider $9$ pairwise disjoint copies of $Q$, labeled $Q_{ij}$ for $1 \leq i$, $j \leq 3$. For each vertex $u \in V(Q)$, its copy in $Q_{ij}$ is denoted by $u_{ij}$. Let $\Cov{F}_{ij} = (L_{ij}, F_{ij})$ be a cover of $Q_{ij}$ isomorphic to $\Cov{F}$. Again, we assume that the graphs $F_{ij}$ are pairwise disjoint and use $u_{ij}$ to denote the copy of a vertex $u \in V(F)$ in $F_{ij}$. Let $G$ be the graph obtained from the (disjoint) union of the graphs $Q_{ij}$ by identifying the vertices $a_{11}$, \ldots, $a_{33}$ to a new vertex $a^\ast$ and the vertices $b_{11}$, \ldots, $b_{33}$ to a new vertex~$b^\ast$. Let $H$ be the graph obtained from the union of the graphs $F_{ij}$ by identifying, for each $1 \leq i$, $j \leq 3$, the vertices $x_{i1}$, $x_{i2}$, $x_{i3}$ to a new vertex $x_i$ and the vertices $y_{1j}$, $y_{2j}$, $y_{3j}$ to a new vertex $y_j$. Define the map $L^\ast \colon V(G) \to \powerset{V(H)}$ as follows:
	$$
		L^\ast(u) \defeq \begin{cases}
			L_{ij}(u) &\text{if } u \in V(Q_{ij});\\
			\set{x_1, x_2, x_3} &\text{if } u = a^\ast;\\
			\set{y_1, y_2, y_3} &\text{if } u = b^\ast.
		\end{cases}
	$$
	Then $\Cov{H} \defeq (L^\ast, H)$ is a $3$-fold cover of $G$. We claim that $G$ is not $\Cov{H}$-colorable. Indeed, suppose that $I$ is an $\Cov{H}$-coloring of $G$ and let $i$ and $j$ be the indices such that $\set{x_i, y_j} \subset I$. Then $I$ induces an $\Cov{F}_{ij}$-coloring of $Q_{ij}$, which cannot exist by Lemma~\ref{lemma:K}. Since $G$ is evidently planar and bipartite, the proof of Theorem~\ref{theo:plan_bip} is complete.
	
	\section{Proof of Theorem~\ref{theo:reg_ind}}\label{sec:reg_ind}
	
	Let $d \geq 2$ and let $G$ be an $n$-vertex $d$-regular graph. If $\chi'(G) = d+1$, then $\chi'_{DP}(G) \geq d+1$ as well, so from now on we will assume that $\chi'(G) = d$. In particular, $n$ is even. Indeed, a proper coloring of $\mathsf{Line}(G)$ is the same as a partition of $E(G)$ into matchings, and if $n$ is odd, then $d$ matchings can cover at most $d(n-1)/2 < dn/2 = |E(G)|$ edges of $G$.
	
	Let $uv \in E(G)$  and let $G'\defeq G-uv$. 
 Our argument hinges on the following simple observation:
	
	\begin{lemma}\label{lemma:same}
		Let $C$ be a set of size $d$ and let $f \colon E(G') \to C$ be a proper coloring of $\mathsf{Line}(G')$. For each $w \in \{u,v\}$, let $f_w$ denote the unique color in $C$ not used in coloring the edges incident to $w$.
 Then $f_u =f_v$.
	\end{lemma}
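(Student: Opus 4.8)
The plan is to exploit the fact that, since $G$ is $d$-regular and we are assuming $\chi'(G)=d$, each color class of a proper $d$-edge-coloring behaves essentially like a perfect matching, and to derive $f_u=f_v$ from a simple parity argument. First I would record that for each color $c \in C$, the set $M_c \defeq f^{-1}(c) \subseteq E(G')$ is a matching in $\mathsf{Line}(G')$, i.e.\ a set of pairwise disjoint edges of $G'$, so $M_c$ covers an \emph{even} number of vertices.

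Next I would pin down exactly which vertices can fail to be covered by a given color. Every vertex $w \in V(G) \setminus \set{u,v}$ still has degree $d$ in $G'$, since deleting the edge $uv$ only lowered the degrees of $u$ and $v$; hence the $d$ edges incident to $w$ receive all $d$ colors of $C$, and so $w$ is covered by every color class $M_c$. Consequently, for any color $c$, the only vertices that $M_c$ can possibly miss are $u$ and $v$. By the definition of $f_u$ and $f_v$, the vertex $u$ is missed by $M_c$ precisely when $c = f_u$, and $v$ is missed precisely when $c = f_v$.

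Now I would combine these two observations. Suppose, towards a contradiction, that $f_u \neq f_v$, and consider the color $f_u$. Among all the vertices of $G$, the matching $M_{f_u}$ misses $u$ (since $f_u$ is by definition the color absent at $u$), but it covers $v$ (since $f_v \neq f_u$ forces $f_u$ to occur among the edges at $v$) and it covers every other vertex. Therefore $M_{f_u}$ covers exactly $n-1$ vertices. As $G$ has already been shown to have an even number of vertices $n$, the count $n-1$ is odd, contradicting the fact that a matching covers an even number of vertices. Hence $f_u = f_v$.

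The argument is short, and its only real content is this parity observation; the point to get right is simply that deleting $uv$ lowers the degree at $u$ and $v$ alone, so that $u$ and $v$ are the sole vertices at which some color can be missing. I do not anticipate any genuine obstacle beyond setting up this bookkeeping carefully and invoking the previously established evenness of $n$.
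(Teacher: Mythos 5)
Your proof is correct, but it takes a genuinely different (though related) route from the paper's. The paper argues by a global edge count: each colour class $M_c$ is a matching, so $|M_c| \le n/2$, while $M_{f_u}$ and $M_{f_v}$ each miss a vertex and hence have at most $n/2 - 1$ edges (using that $n$ is even); if $f_u \neq f_v$, summing these bounds over all $d$ classes gives at most $dn/2 - 2$, contradicting $|E(G')| = dn/2 - 1$. You instead fix the single class $M_{f_u}$ and observe that every vertex outside $\{u,v\}$ still has degree $d$ in $G'$ and therefore sees all $d$ colours, so if $f_u \neq f_v$ then $M_{f_u}$ covers exactly the $n-1$ vertices of $V(G) \setminus \{u\}$ --- an odd number, which is impossible for a matching. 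Both arguments ultimately rest on the parity of $n$, but yours is local (one colour class, one parity obstruction) where the paper's is global (a sum of sizes over all classes); the price you pay is the extra, though easy, observation that every colour class saturates all vertices outside $\{u,v\}$, which the paper's count does not need. Your bookkeeping is accurate and the proof is complete as written.
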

	\begin{proof}
		For each $c \in C$, let $M_c \subseteq E(G')$ denote the matching formed by the edges $e$ with $f(e) = c$. Then $|M_c| \leq n/2$ for all $c \in C$. Moreover, by definition, $\max \set{|M_{f_u}|, |M_{f_v}|} \leq n/2 - 1$. Thus, if $f_u \neq f_v$, then
		$$
			\frac{dn}{2} - 1 = |E(G')| = \sum_{c \in C} |M_c| \leq \frac{dn}{2} - 2;
		$$
		a contradiction.
	\end{proof}
	
	Let $\Z_d$ denote the additive group of integers modulo $d$ and let $H$ be the graph with vertex set $$V(H) \defeq E(G) \times \Z_d,$$ in which the following pairs of vertices are adjacent:
	\begin{itemize}
		\item[--] $(e, i)$ and $(e, j)$ for $e \in E(G)$ and $i$, $j \in \Z_d$ with $i \neq j$,
		\item[--] $(e, i)$ and $(h, i)$ for $eh \in E(\mathsf{Line(G')})$ and $i \in \Z_d$,
		\item[--] $(uv, i)$ and $(uv', i)$ for $uv' \in E(G')$ and $i \in \Z_d$;
		\item[--] $(uv, i)$ and $(u'v, i + 1)$ for $u'v \in E(G')$ and $i \in \Z_d$.
	\end{itemize}
	For each $e \in E(G)$, let $L(e) \defeq \set{e} \times \Z_d$. Then $\Cov{H} \defeq (L, H)$ is a $d$-fold cover of $\mathsf{Line}(G)$. We claim that $\mathsf{Line}(G)$ is not $\Cov{H}$-colorable (which proves Theorem~\ref{theo:reg_ind}). Indeed, suppose that $I$ is an $\Cov{H}$-coloring of $\mathsf{Line}(G)$. For each $e \in E(G')$, let $f(e)$ denote the unique element of $\Z_d$ such that $(e, f(e)) \in I$. Then $f$ is a proper coloring of $\mathsf{Line}(G')$ with $\Z_d$ as its set of colors. Let $f_u$ be the unique element of $\Z_d$ that is not used in coloring the edges incident to $u$. Then the only element of $L(uv)$ that can, and therefore must, belong to $I$ is $(uv, i)$. On the other hand, Lemma~\ref{lemma:same} implies that $i$ is also the unique element of $\Z_d$ that is not used in coloring the edges incident to $v$, and, in particular, for some $u'v \in E(G')$, $f(u'v) = i+1$. Since $(uv, i)$ and $(u'v, i+1)$ are adjacent vertices of $H$, $I$ is not an independent set, which is a contradiction.

	\section{Edge-DP-colorings of multigraphs}\label{sec:multi}

	One can extend the notion of DP-coloring to loopless multigraphs, see~\cite{BKP16}. The definitions are almost identical; the only difference is that in Definition~\ref{defn:cover}, \ref{item:matching} is replaced by the following:

	\begin{enumerate}[labelindent=\parindent,leftmargin=*,label=(C\arabic*)]
		\item[(C4$'$)] If $u$ and $v$ are connected by $t \geq 1$ edges in $G$, then $E_H(L(u), L(v))$ is a union of $t$ matchings.
	\end{enumerate}
	
	An interesting property of DP-coloring of multigraphs is that the DP-chromatic number of a multigraph may be larger than its number of vertices. For example, the multigraph $K^t_k$ obtained from the complete graph $K_k$ by replacing each edge with $t$ parallel edges satisfies
	$$
		\chi_{DP}(K^t_k) = \Delta(K^t_k) + 1 = tk - t + 1.
	$$
	(See~\cite[Lemma~7]{BKP16}.)

	Similarly to the case of simple graphs, the \emph{line graph} $\mathsf{Line}(G)$ of a multigraph $G$ is the graph with vertex set $E(G)$ such that two vertices of $\mathsf{Line}(G)$ are adjacent if and only if the corresponding edges of $G$ share at least one endpoint. Notice that, in particular, $\mathsf{Line}(G)$ is always a simple graph. Sometimes, instead of $\mathsf{Line}(G)$, it is more natural to consider the \emph{line multigraph} $\mathsf{MLine}(G)$, where if two edges of $G$ share both endpoints, then the corresponding vertices of $\mathsf{MLine}(G)$ are joined by a pair edges. Line multigraphs were used, e.g., in the seminal paper by Galvin~\cite{Gal95} and also in~\cite{BKW97, BKW98}.

	Somewhat surprisingly, Shannon's bound $\chi'(G) \leq 3\Delta(G)/2$~\cite{Sha49} on the chromatic index of a multigraph $G$ does not extend to $\chi_{DP}(\mathsf{MLine}(G))$. Indeed, if $G \cong K^d_2$, i.e., if $G$ is the $2$-vertex multigraph with $d$ parallel edges, then $\mathsf{MLine}(G) \cong K^2_d$, so $$\chi_{DP}(\mathsf{MLine}(G)) = \chi_{DP}(K^2_d)= 2d-1 = 2 \Delta(G) -1.$$ This is in contrast with the result in~\cite{BKW97} that $\chi'_\ell(G)\leq 3\Delta(G)/2$ for every multigraph $G$. However, we conjecture that the analog of Shannon's theorem holds for line \emph{graphs}:

	\begin{conj}
		For every multigraph $G$, $\chi_{DP}(\mathsf{Line}(G)) \leq 3\Delta(G)/2$.
	\end{conj}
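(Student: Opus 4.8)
The plan is to set $k \defeq \lfloor 3\Delta(G)/2\rfloor$ and to produce an $\Cov{H}$-coloring for every $k$-fold cover $\Cov{H} = (L,H)$ of $\mathsf{Line}(G)$. First I would reduce to connected $G$ (distinct components are colored independently) and record the clique structure of line graphs: for each $v \in V(G)$ the edges incident to $v$ form a clique $C_v$ in $\mathsf{Line}(G)$, every edge of $\mathsf{Line}(G)$ lies in some $C_v$, and every vertex of $\mathsf{Line}(G)$ lies in exactly two of the $C_v$. Consequently an $\Cov{H}$-coloring is exactly a system of representatives $\phi(e) \in L(e)$, $e \in E(G)$, such that $\set{\phi(e) : e \ni v}$ is independent in $H$ for every $v \in V(G)$; since the sets $\set{L(e) : e \ni v}$ partition $H[\bigcup_{e \ni v} L(e)]$ into cliques, this is an independent-transversal problem governed by the matchings $E_H(L(e),L(f))$ that $\Cov{H}$ places on adjacent edges $e,f$. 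Note that in the conflict graph formed by these matchings each vertex $(e,c)$ has degree at most $\deg_{\mathsf{Line}(G)}(e) \leq 2\Delta - 2$, while the available palette has size $k \approx 3\Delta/2$, so the trivial bound $\chi_{DP} \leq \Delta(\mathsf{Line}(G)) + 1 \leq 2\Delta - 1$ and even the degeneracy bound fall short of the target and must be beaten.

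Two routes present themselves, mirroring the two classical proofs of Shannon's bound $\chi'(G) \leq \lfloor 3\Delta/2\rfloor$. The \emph{recoloring route} is induction on $|E(G)|$: delete an edge $e = xy$, color $\mathsf{Line}(G-e)$, and extend. Here $e$ has up to $2\Delta - 2$ already-colored neighbors against only $k \approx 3\Delta/2$ colors, so a direct extension can fail and one must rearrange the colors along a Vizing fan at $x$ and $y$ together with a two-color alternating (Kempe) chain. The \emph{decomposition route} partitions $E(G)$ into $\lceil \Delta/2\rceil$ subgraphs of maximum degree at most $2$ (a standard consequence of an Eulerian orientation after fixing parities); the line graph of each such subgraph is a disjoint union of paths and cycles and hence has DP-chromatic number at most $3$ by Example~\ref{exmp:cycles}, suggesting a total of $3\lceil \Delta/2\rceil$ colors, which already matches the target only for even $\Delta$ and overshoots $\lfloor 3\Delta/2\rfloor$ for odd $\Delta$. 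A third, more promising route is to attack the independent-transversal reformulation head-on with a Haxell-type theorem, exploiting that the conflicts at a vertex $(e,c)$ split into a matching inside $C_x$ and a matching inside $C_y$.

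The main obstacle is that \emph{neither classical route survives the passage to DP-coloring}, which is exactly why the statement is only conjectured. In the recoloring route the Kempe step is meaningless: along an edge $uv$ of $G$ the cover identifies the colors of $L(u)$ and $L(v)$ through an arbitrary matching, so swapping two color classes along an alternating walk is not well defined and need not preserve properness. In the decomposition route the ``private palettes'' device that lets list coloring glue the parts together is unavailable, because the cover is adversarial and one cannot force the vertices chosen on different parts to be non-adjacent in $H$; indeed $\chi_{DP}$ is not subadditive over vertex partitions (take $K_{n,n}$ split into its two sides, where the two sides have $\chi_{DP} = 1$ yet $\chi_{DP}(K_{n,n}) \geq \chi_\ell(K_{n,n}) \to \infty$). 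Moreover the Alon--Tarsi orientation method, which is precisely what proves the list-coloring analogues for line graphs, provably fails for DP-coloring. Thus a genuinely new, recoloring- and orientation-free argument is required; concretely, in the Haxell route the conflict graph can have maximum degree close to $2\Delta - 2$, far above the part size $k \approx 3\Delta/2$, so a naive application of the $2\Delta$-threshold independent-transversal theorem is useless, and the crux is to extract enough additional structure from $\mathsf{Line}(G)$ (the two-matching splitting at each color, or a local/topological variant of the transversal theorem) to close this factor-$\tfrac83$ gap.
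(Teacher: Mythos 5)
The statement you were given is a \emph{conjecture}: the paper states it without proof (it appears at the end of Section~\ref{sec:multi} precisely because the authors do not know how to prove it), so there is no proof in the paper to compare your attempt against. To your credit, your write-up recognizes this --- it is not a proof but a survey of why the obvious attacks fail, and as such it is essentially accurate: the trivial bound $\chi_{DP}(\mathsf{Line}(G)) \leq \Delta(\mathsf{Line}(G)) + 1 \leq 2\Delta(G) - 1$ is correct but too weak; Kempe-chain recoloring is indeed unavailable because the matchings $E_H(L(e), L(f))$ need not be consistent along alternating walks; the decomposition into $\lceil \Delta/2 \rceil$ subgraphs of maximum degree $2$ cannot be glued because DP-chromatic number (like list-chromatic number, but unlike ordinary chromatic number) is not subadditive over vertex partitions of the line graph; and the Alon--Tarsi/kernel machinery behind the list-coloring analogue of Shannon's bound in~\cite{BKW97} is exactly the kind of tool this paper shows does not transfer to DP-coloring. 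Your observation that Haxell's independent-transversal threshold falls short by roughly a factor of $8/3$ is also correct.

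That said, be clear about what you have and have not done: nothing in your text establishes any bound better than $2\Delta(G) - 1$, so no part of the conjecture is proved. If you want to extract a publishable statement from your third route, the place to push is the structural splitting you identify --- each vertex $(e, c)$ of the conflict graph has its neighborhood split into two matchings, one inside $C_x$ and one inside $C_y$ for $e = xy$ --- together with the fact that the paper's own Section~\ref{sec:multi} shows the bound is tight in a strong sense for $\mathsf{MLine}$ (where $\chi_{DP}(\mathsf{MLine}(K^d_2)) = 2\Delta - 1$), so any proof must genuinely use that $\mathsf{Line}(G)$ collapses parallel edges to a single vertex. As submitted, your text should be labeled a discussion of the conjecture, not a proof of it.
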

	
	
\end{document}